\newtheorem{thm}{Theorem}[section]
\newtheorem{Prop}[thm]{Proposition}
\newtheorem{lemma}[thm]{Lemma}
\newcommand{\bc}{{\mathbb C}}
\newcommand{\bC}{{\mathbb C}}
\newcommand{\bh}{{\mathbb H}}
\newcommand{\br}{{\mathbb R}}
\newcommand{\bZ}{{\mathbb Z}}
\newcommand{\ra}{\rightarrow}
\newcommand{\Z}{\mathbb{Z}}
\newcommand{\U}{\operatorname{U}}
\newcommand{\SU}{\operatorname{SU}}
\newcommand{\PU}{\operatorname{PU}}
\renewcommand{\U}{\operatorname{U}}
\newcommand{\Sp}{\operatorname{Sp}}
\newcommand{\SL}{\operatorname{SL}}
\newcommand{\Ad}{\operatorname{Ad}}
\newcommand{\GE}{\Gamma_\textrm{8}}
\newcommand{\GW}{\Gamma_\textrm{W}}
\newcommand{\Hom}{\operatorname{Hom}}
\newcommand{\id}{\operatorname{id}}
\let \cal \mathcal
\newenvironment{pf}{\begin{trivlist}\item[]{\bf Proof:\ }}
{\mbox{}\hfill\rule{.08in}{.08in}\end{trivlist}}
\begin{document}
\title[Quaternionic deformations of discrete groups]
{Deformation spaces of discrete groups of SU(2,1) in quaternionic hyperbolic plane:
a case study}
\author{Antonin Guilloux}
\address{Sorbonne Universit\'e, CNRS, IMJ-PRG and INRIA, OURAGAN, 4, Place Jussieu, 75005 Paris, France}
\email{antonin.guilloux@imj-prg.fr}

\author{Inkang Kim}
\address{School of Mathematics,
     KIAS, Hoegiro 85, Dongdaemun-gu,
     Seoul, 130-722, Korea}
\email{inkang@kias.re.kr}
     
\date{}
\begin{abstract}
In this note, we study deformations of discrete and Zariski dense subgroups 
of $\SU(2,1)$ in the isometry group $\Sp(2,1)$ of quaternionic hyperbolic space. Specifically we consider two examples coming from
 representations of $3$-manifold groups (the figure eight knot and Whitehead links complement)
 and show opposite behaviors: one is not deformable outside 
 $U(2,1)$, while the other
 has a big space of deformations in $\Sp(2,1)$.
\end{abstract}

\maketitle

 \footnotetext[1]{2000 {\sl{Mathematics Subject
Classification.}} 51M10, 57S25.} \footnotetext[2]{{\sl{Key words and
phrases.}} Quaternionic hyperbolic space, complex hyperbolic space,
local rigidity, representation variety.} \footnotetext[3]{The second author
gratefully acknowledges the partial support of the grant (NRF-2017R1A2A2A05001002) and a warm support of IHES during his stay.}

\section{Introduction}\label{intro}
In 1960's, A.  Weil  \cite{Weil} proved a local rigidity of a
uniform lattice $\Gamma\subset G$ inside $G$: he showed that
$H^1(\Gamma,\mathfrak{g})=0$ for any semisimple Lie group $G$ not
locally isomorphic to $\mathrm{SL}(2,\br)$. This result implies that the
canonical inclusion map $i:\Gamma \hookrightarrow G$ is locally
rigid up to conjugacy. In other words, for any local deformation
$\rho_t:\Gamma\ra G$ such that $\rho_0=i$, there exists a continuous
family $g_t\in G$ such that $\rho_t= g_t \rho_0 g_t^{-1}$. Weil's
idea is further explored by many others but notably by Raghunathan
\cite{Ra} and Matsushima-Murakami \cite{MM}. Much later Goldman and
Millson \cite{GM} considered the embedding of a uniform lattice
$\Gamma$ of $\mathrm{SU}(n,1)$
	$$\Gamma \hookrightarrow \mathrm{SU}(n,1)\hookrightarrow \mathrm{SU}(n+1,1)$$ and
proved that there is still a local rigidity inside $\mathrm{SU}(n+1,1)$ if one 
ignores a deformation coming from the center. More recently  further 
examples \cite{KKP, KP,KZ, K} of
local rigidity of a complex hyperbolic lattice in quaternionic
K\"ahler manifolds are described in the following situations:
$$\Gamma\hookrightarrow \mathrm{SU}(n,1)\subset \mathrm{Sp}(n,1)\subset \mathrm{SU}(2n,2)
\subset \mathrm{SO}(4n,4),$$
$$\Gamma\hookrightarrow \mathrm{SU}(n,1)\subset \mathrm{SU}(p,q),$$  $$\Gamma\hookrightarrow \mathrm{SU}(n,1)\subset \mathrm{Sp}(n+1, \mathbb R),$$$$ \Gamma\hookrightarrow \mathrm{SU}(n,1)\subset \mathrm{SO}(2n,2).$$

But all these examples deal with the standard inclusion map $\Gamma
\hookrightarrow G'$ to use the Weil's original idea about
$L^2$-group cohomology. We look in this paper at the more general
setting of a representation $\rho:\Gamma\ra G \subset G'$. We focus our attention to the case where
the representation is discrete and has Zariski-dense image in $G$. We seek
the possibility of deforming $\rho$ in $G'$ without being conjugate to a representation
landing in $G$.

In general, very little is known on this general problem. We study here deformations of two representations of non-uniform lattices of 
$\mathrm{SL}(2,\bC)$ inside $\Sp(2,1)$. Indeed, let $M_8$ be the figure eight knot complement
and denote by $\GE$ its fundamental group, and let $M_W$ be the Whitehead link
complement and $\GW$ its fundamental group. 

The character variety $\chi(\Gamma_8,\SU(2,1))$
is fully understood \cite{FGKRT} (see section \ref{Character} for the definition of character variety), and it contains $2$ (up to 
some equivalences) boundary unipotent
irreducible representations $\rho_0$ and $\rho_1$ which are already obtained in 
\cite{Falbel}, see also \cite{FalbelKoseleffRouillier}. We will  be mainly interested in 
the representation $\rho_0$ whose image is generated by the  following matrices in $\SU(2,1)$:
\[\left[ \begin{matrix}
     1 &   1   & \frac{-1-i\sqrt 3}{2}\\
     0 &   1   & -1  \\
     0 & 0 & 1 \end{matrix} \right]
     \quad \textrm{and} \quad \left[\begin{matrix}
   1 & 0 & 0 \\
   1 & 1 & 0\\
\frac{-1-i\sqrt 3}{2}&-1& 1 \end{matrix}\right]\]
In particular, we see that the image of $\rho_0$ is included in the Eisenstein-Picard 
arithmetic lattice of $\SU(2,1)$. It turns out that it is a \emph{thin} subgroup, as it is Zariski-dense.
We will show that $\rho_0$, as its surrounding lattice, is not deformable outside $\U(2,1)$. 
Recently some thin subgroups of finite index in $\Gamma_8$  were constructed inside lattices in $\SL(4,\br)$, that are indeed deformable (inside $\SL(4,\br)$) \cite{BD}.

Our knowledge of the character variety $\chi(\Gamma_W,\SU(2,1))$ is far less thorough. Boundary 
unipotent representations are described in \cite{FalbelKoseleffRouillier}, whereas a component
of this character variety has been described in \cite{GW}. We will consider a 
representation $\rho_W$ inside this component. Note that the image of $\rho_W$ is a free product
of two copies of $\bZ/3\bZ$ and is not contained in an arithmetic lattice. We will prove that
$\rho_W$ has a big space of deformations in $\Sp(2,1)$ and is therefore deformable outside  $\U(2,1)$.

We will first describe what is known about $\rho_0$ and $\rho_W$, exhibiting structural
differences. We then prove that the first one is rigid whereas the second one is deformable.
It would be very interesting to understand which properties of these representations lead to
the rigidity or deformability.

\section{Two opposite behaviors}

\subsection{Rigidity of $\rho_0$}

The fundamental group $\Gamma_8$ has a presentation \cite{FGKRT}:
$$\Gamma_8 = \langle a,b | b^{-1}aba^{-1}bab^{-1}a^{-1}ba^{-1} \rangle.$$
We consider the representation $\rho_0$ defined by the images of the generators:
$$\rho_0(a) = \left[ \begin{matrix}
     1 &   1   & \frac{-1-i\sqrt 3}{2}\\
     0 &   1   & -1  \\
     0 & 0 & 1 \end{matrix} \right] \quad \textrm{and} \quad   
     \rho_0(b) =\left[\begin{matrix}
   1 & 0 & 0 \\
   1 & 1 & 0\\
\frac{-1-i\sqrt 3}{2}&-1& 1 \end{matrix}\right]
$$
 
We prove in this paper that the representation $\rho_0$ cannot be deformed locally outside  $\U(2,1)$.
The proof is fairly straightforward, though involved computations are tedious. Here are the steps:
\begin{enumerate}
  \item As we will see in section \ref{CharVar-8}, at $[\rho_0]$,
  the character variety  $\chi(\Gamma_8,\U(2,1))$ is $3$-dimensional.
  \item We are able to compute the tangent space to $\chi(\Gamma_8,\Sp(2,1))$ at $[\rho_0]$: 
  it amounts to compute $H^1(\Gamma_8, \mathfrak{sp}(2,1)_{\mathrm{ad}(\rho_0)})$. 
  This homological computation will be explained in section \ref{hom-comp}. 
  The computed dimension is $3$.
  
  \item As we will recall in section \ref{generalities-character}, the natural map 
  $\chi(\Gamma_8,\U(2,1)) \to \chi(\Gamma_8,\Sp(2,1))$ is 
  a local diffeomorphism onto its image.
\end{enumerate}
Knowing these three facts, we see:
\begin{Prop}\label{prop:rigidity}
Every small deformation of $\rho_0 : \Gamma_8 \to \Sp(2,1)$ results in a representation conjugate to a representation $\Gamma_8 \to \U(2,1)$.
\end{Prop}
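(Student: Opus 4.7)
The plan is a dimension count at the level of character varieties that combines the three preparatory facts. The strategy is to use (2) as an upper bound on the local dimension of $\chi(\Gamma_8, \Sp(2,1))$ at $[\rho_0]$, use (1) and (3) to exhibit a smooth $3$-dimensional submanifold inside it of matching dimension, and deduce local equality.

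In detail, I would first invoke the Weil identification of $H^1(\Gamma_8, \mathfrak{sp}(2,1)_{\mathrm{ad}(\rho_0)})$ with the Zariski tangent space of $\chi(\Gamma_8, \Sp(2,1))$ at $[\rho_0]$; by (2) this tangent space has dimension $3$, so any analytic germ of $\chi(\Gamma_8, \Sp(2,1))$ at $[\rho_0]$ sits inside a $3$-dimensional vector space and has local dimension at most $3$. Next, combining (1) and (3), the natural map $\pi : \chi(\Gamma_8, \U(2,1)) \to \chi(\Gamma_8, \Sp(2,1))$ is a local diffeomorphism from a $3$-dimensional source, so its image is a smooth $3$-dimensional submanifold of $\chi(\Gamma_8, \Sp(2,1))$ at $[\rho_0]$. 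Together with the upper bound, this forces $\chi(\Gamma_8, \Sp(2,1))$ to be locally smooth of dimension $3$ at $[\rho_0]$ and to coincide there with the image of $\pi$. Finally, I would translate back to $\Hom$: a small deformation $\rho$ of $\rho_0$ in $\Sp(2,1)$ yields a point $[\rho]$ near $[\rho_0]$ in $\chi(\Gamma_8, \Sp(2,1))$, which by the local equality comes from some class $[\rho'] \in \chi(\Gamma_8, \U(2,1))$ with $\pi([\rho']) = [\rho]$; equivalently, $\rho$ is $\Sp(2,1)$-conjugate to a representation $\rho' : \Gamma_8 \to \U(2,1)$, which is the conclusion.

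The genuine difficulty is not in this assembly but in the preparatory steps: fact (2) requires a concrete cocycle calculation with the one-relator presentation of $\Gamma_8$ and the adjoint action of $\rho_0$ on $\mathfrak{sp}(2,1)$, while fact (3) rests on a quadratic-form argument distinguishing infinitesimal deformations inside $\U(2,1)$ from those that become trivial in $\Sp(2,1)$. Within the argument above, the one subtle point is that dimension matching must be shown to rule out extra local components of $\chi(\Gamma_8, \Sp(2,1))$ through $[\rho_0]$; because Weil's $H^1$ computes the Zariski tangent space of the scheme $\chi(\Gamma_8, \Sp(2,1))$, the existence of a smooth $3$-dimensional subvariety attaining this bound forces local smoothness and integrality, so no such extra components can exist and the proposition follows.
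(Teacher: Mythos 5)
Your argument is correct and assembles the same three preparatory facts into the same overall dimension count, but the final step is carried out by a genuinely different mechanism than the paper's. The paper does not argue directly on the character varieties: it uses the $\mathrm{Ad}(\rho_0)$-invariant splitting $\mathfrak{sp}(2,1)=\mathfrak{u}(2,1)\oplus S^2{\mathbb C}^3$ of the coefficient module (as a real representation), which gives $H^1(\Gamma_8,\mathfrak{sp}(2,1))=H^1(\Gamma_8,\mathfrak{u}(2,1))\oplus H^1(\Gamma_8,S^2{\mathbb C}^3)$; comparing the two computed dimensions ($3=3+\dim H^1(\Gamma_8,S^2{\mathbb C}^3)$) yields the vanishing $H^1(\Gamma_8,S^2{\mathbb C}^3)=0$, i.e.\ there are no infinitesimal deformations transverse to $\mathfrak{u}(2,1)$, and the conclusion follows. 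You replace this module splitting by fact (3) plus the commutative-algebra observation that a germ whose Zariski tangent space is $3$-dimensional and which contains a smooth $3$-dimensional subgerm must be smooth and equal to that subgerm. The two devices play the same role: the direct-summand decomposition is what guarantees, in the paper, that $H^1(\Gamma_8,\mathfrak{u}(2,1))$ injects into $H^1(\Gamma_8,\mathfrak{sp}(2,1))$, which is exactly the injectivity your use of fact (3) supplies. Your version has the merit of making explicit the passage from the infinitesimal computation to the local conclusion (ruling out extra branches of $\chi(\Gamma_8,\Sp(2,1))$ through $[\rho_0]$), a point the paper leaves implicit; the paper's version has the merit of exhibiting the concrete module $S^2{\mathbb C}^3$ whose cohomology controls the transverse deformations. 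One side remark of yours is off: fact (3) does not rest on a quadratic-form argument but on Lemma \ref{nonconjugate}, which shows that two Zariski-dense representations into $\U(2,1)$ conjugate in $\Sp(2,1)$ are already conjugate in $\SU(2,1)$ up to complex conjugation; this does not affect the validity of your proof, which takes (3) as given.
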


\subsection{Deformability of $\rho_W$}\label{deform}

Following \cite{GW}, the fundamental group $\Gamma_W$ has a presentation:
$$\Gamma_W = \langle a,b | aba^{-3}b^2a^{-1}b^{-1}a^3b^{-2} \rangle.$$
We consider the representation $\rho_W$ defined by the images of the generators:
$$\rho_W(a) = \left[ \begin{matrix}
     1 & \frac{\sqrt{3} - i\sqrt{5}}{2} & -1\\
\frac{-\sqrt{3} - i\sqrt{5}}{2} & -1 & 0\\
-1 & 0 & 0\end{matrix} \right]  \textrm{ and }  
     \rho_W(b) =\left[\begin{matrix}
   1 & -\frac{\sqrt{3} + i\sqrt{5}}{2} & -1\\
\frac{\sqrt{3} - i\sqrt{5}}{2} & -1 & 0\\
-1 & 0 & 0 \end{matrix}\right]
$$

Unlike the previous example, we prove in this paper that the representation $\rho_W$ can be deformed locally outside  $\U(2,1)$.
The proof is once again fairly straightforward. Here are the steps:
\begin{enumerate}
\item $\rho_W$ factors through a quotient $\Z_3* \Z_3$, and the whole component of the $\SU(2,1)$-character variety of $\Gamma_W$ does (see section \ref{CharVar-W}).
\item The $\U(2,1)$-character variety has dimension $6$ at $\rho_W$ (see section \ref{CharVar-W}).
\item The $\Sp(2,1)$-character of $\Z_3* \Z_3$ at $\rho_W$ has dimension at least $7$ (see section \ref{Wlocaldim}).
\end{enumerate}
We hence see that the $\Sp(2,1)$-character variety of $\Gamma_W$ has dimension, at $\rho_W$, at least $1$ more than the dimension of the $\U(2,1)$-character variety. It yields:
\begin{Prop}\label{prop:deformability}
There are small deformations of $\rho_W: \Gamma_W \to \Sp(2,1)$ which are
not conjugate to any representation $\Gamma_W \to \U(2,1)$.
\end{Prop}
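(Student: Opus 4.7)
The proof reduces to a dimension count, combining the three ingredients listed just above the statement.

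First I would use step (1) to transport the information from $\Z_3 * \Z_3$ back to $\Gamma_W$. Since $\rho_W$ factors through the quotient map $q : \Gamma_W \twoheadrightarrow \Z_3 * \Z_3$, pre-composition with $q$ induces a map of character varieties
\[
q^{*} : \chi(\Z_3 * \Z_3, \Sp(2,1)) \longrightarrow \chi(\Gamma_W, \Sp(2,1)),
\]
sending the class $[\tilde{\rho}_W]$ of the representation of the quotient that induces $\rho_W$ to $[\rho_W]$. This map is injective: since $q$ is surjective, two representations of $\Z_3 * \Z_3$ are $\Sp(2,1)$-conjugate iff their pullbacks to $\Gamma_W$ are. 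Hence by step (3) the dimension of $\chi(\Gamma_W, \Sp(2,1))$ at $[\rho_W]$ is at least $7$.

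Next I would invoke the general result recalled in section \ref{generalities-character}: the natural map $\chi(\Gamma_W, \U(2,1)) \to \chi(\Gamma_W, \Sp(2,1))$ is a local diffeomorphism onto its image. Combined with step (2), the image is a $6$-dimensional submanifold of $\chi(\Gamma_W, \Sp(2,1))$ in a neighborhood of $[\rho_W]$.

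The conclusion then follows from $7 > 6$: a germ of dimension at least $7$ cannot be contained in a germ of dimension $6$, so there exist classes $[\rho_t] \in \chi(\Gamma_W, \Sp(2,1))$ arbitrarily close to $[\rho_W]$ not lying in the image of $\chi(\Gamma_W, \U(2,1))$. Choosing a representative $\rho_t$ along a smooth curve in $\chi(\Gamma_W, \Sp(2,1))$ whose tangent at $[\rho_W]$ is transverse to the $6$-dimensional image produces a continuous family of small deformations of $\rho_W$, none of which (for small $t \neq 0$) is conjugate in $\Sp(2,1)$ to a representation valued in $\U(2,1)$.

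The real content of the argument lies entirely in establishing the three numerical inputs; given them the proposition is an immediate dimension comparison. The main obstacle is therefore step (3), namely producing a lower bound of $7$ for the dimension of the $\Sp(2,1)$-character variety of $\Z_3 * \Z_3$ at $\tilde{\rho}_W$ — equivalently a lower bound on $\dim H^{1}(\Z_3 * \Z_3, \mathfrak{sp}(2,1)_{\mathrm{ad}(\rho_W)})$ — which requires an explicit cocycle/deformation computation in the free-product setting.
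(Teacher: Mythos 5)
Your argument is correct and is essentially the paper's own proof: given the three numerical inputs, the proposition is exactly the dimension comparison $7>6$, and the paper concludes in the same way (for this direction one only needs that the image of the six-dimensional $\U(2,1)$-character variety cannot contain a seven-dimensional germ, so even the local-diffeomorphism input is more than is strictly required). The only minor discrepancy is your closing remark: the paper establishes the step (3) lower bound not by a cocycle computation but by a geometric count, reducing to pairs $(A,gAg^{-1})$ of order-$3$ elements modulo conjugation and bounding the dimension below by $\dim\Sp(2,1)-2\dim Z=21-14=7$, where $Z$ is the centralizer of $A$.
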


\section{Character varieties}\label{Character}
The $G$-character variety of $\pi_1(M),$ denoted $\chi(\pi_1(M), G),$ is the geometric invariant theory quotient of $\Hom(\pi_1(M), G)$ by inner automorphisms of $G$. Often, some components of the character variety are realized as the space of $(G, X)$-structures on a given manifold $M$. Thurston studied
the Dehn surgery space of a hyperbolic knot complement in the early 70s using the idea of gluing tetrahedra
in hyperbolic 3-space. In his case, the variety appears as defined by his gluing equations \cite{Th}.
Thurston's approach is generalized to several different directions corresponding to different geometric structures such as spherical CR structure and real projective structure associated with Lie groups
$\text{SU}(2,1)$ and $\text{SL}(3,\mathbb R)$ respectively.  The latter one is known as a Hitchin component consisting of  convex real projective structures on a closed surface \cite{Hitchin}.

\subsection{General facts and definitions}\label{generalities-character}

For a given reductive algebraic group $G\subset \text{GL}(m,k)$ defined over $k$, and a finitely generated group $\Gamma$ with $n$-generators,
the {\bf representation variety} is $R(\Gamma, G)=\Hom(\Gamma, G)\subset G^n$, defined by the zero set of polynomials in $k[x_1,\cdots,x_{nm^2}]$. In this paper, $k=\mathbb R$ or $\mathbb C$. A representation
$\rho:\Gamma\ra G$ is {\bf Zariski dense} if the Zariski closure of the image is $G$. The group $G$ acts on $R(\Gamma, G)$ by conjugation, and it is well-known that the orbit of $\rho$ under conjugation
is closed if $\rho$ is Zariski dense.

Since the orbit under the conjugation is not closed in general, the quotient space of $R(\Gamma,G)$ under conjugation is not in general a Hausdorff space. To avoid this phenomenon, one takes the GIT quotient
$\chi(\Gamma,G)=R(\Gamma,G)// G$ to get again an algebraic set, called the {\bf character variety}.

In this paper,  all the representations we are considering  are not contained in $P\times Z(G)$ where $P$ is a  parabolic subgroup and $Z(G)$ is the center of $G$. In this case, the quotient  of $R(\Gamma,G)$ by the conjugation action of $G$ is nice around $\rho$ \cite[Section 1.3]{Gol}, and we can assure that 
the Zariski tangent space of the character variety at  $[\rho]$ can be computed by the first group cohomology of
$\Gamma$ with coefficient in $\mathfrak{g}_{Ad\rho}$.

We will need the following later.
\begin{lemma}\label{nonconjugate}
Let $\nu_1:\Gamma \ra \U(2,1)$ be a Zariski
dense representation which is  conjugate to $\nu_2:\Gamma \ra \U(2,1)$ in $\Sp(2,1)$. Then $\nu_1$ is conjugate  in
$\SU(2,1)$ to either $\nu_2$ or $\overline{\nu_2}$.
\end{lemma}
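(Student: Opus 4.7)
The plan is to exploit the algebraic structure of $\U(2,1)$ inside $\Sp(2,1)$ and reduce the statement to a description of the normalizer $N_{\Sp(2,1)}(\U(2,1))$. Let $g\in\Sp(2,1)$ realize the conjugacy, i.e.\ $g\,\nu_1(\ga)\,g^{-1}=\nu_2(\ga)$ for every $\ga\in\Ga$. First I would argue that $g$ normalizes $\U(2,1)$: by Zariski density, the Zariski closure of $\nu_1(\Ga)$ is $\U(2,1)$, so $g\,\U(2,1)\,g^{-1}$ coincides with the Zariski closure of $\nu_2(\Ga)$, which is contained in $\U(2,1)$; equality of real dimensions then forces $g\,\U(2,1)\,g^{-1}=\U(2,1)$.

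Next I would decompose $g=g_0+g_1 j$ with $g_0,g_1\in M_3(\bc)$ and unpack the condition $gA=\alpha_g(A)\,g$ for every $A\in\U(2,1)$, where $\alpha_g(A):=gAg^{-1}\in\U(2,1)$ is complex. Using $jA=\bar A\,j$ for $A\in M_3(\bc)$, this splits into
\[
g_0\,A=\alpha_g(A)\,g_0\qquad\text{and}\qquad g_1\,\bar A=\alpha_g(A)\,g_1.
\]
Hence $g_0$ intertwines the standard representation of $\U(2,1)$ on $\bc^3$ with its $\alpha_g$-twist, while $g_1$ intertwines the complex-conjugate standard representation with the same twist. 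Irreducibility of the standard representation together with Schur's lemma forces each of $g_0,g_1$ to be either zero or invertible; and both cannot be invertible, since then the standard and its complex conjugate would be equivalent representations of $\U(2,1)$, which is contradicted by evaluating at the central element $iI\in\U(2,1)$. This is the key point of the argument.

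Finally, two cases remain. If $g_1=0$, then $g=g_0\in\U(2,1)$, and after adjusting by a central factor $\mu\in\U(1)$ to land in $\SU(2,1)$ (which does not alter the conjugation action), we obtain $\nu_1$ conjugate to $\nu_2$ in $\SU(2,1)$. If $g_0=0$, then $g=g_1 j$; the reality of the Hermitian form $H$ together with $jKj=-\bar K$ for complex $K$ shows that $g\in\Sp(2,1)$ forces $g_1\in\U(2,1)$, and the identity $jAj^{-1}=\bar A$ for complex $A$ gives $g_1\,\overline{\nu_1(\ga)}\,g_1^{-1}=\nu_2(\ga)$. After the same central adjustment and applying complex conjugation to both sides, we obtain $\nu_1$ conjugate to $\overline{\nu_2}$ in $\SU(2,1)$, using that complex conjugation preserves $\SU(2,1)$. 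The dichotomy $\nu_2$ versus $\overline{\nu_2}$ in the statement thus traces back precisely to the inequivalence of the standard representation of $\U(2,1)$ with its complex conjugate, which is the heart of the argument.
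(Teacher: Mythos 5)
Your proof is correct, but it runs along a genuinely different track from the one in the paper. The paper argues geometrically: since $\nu_1$ is Zariski dense, the conjugating element $Q\in\Sp(2,1)$ must stabilize the totally geodesic copy of $H^2_{\bc}$ inside $H^2_{\bh}$, and its restriction there is either a holomorphic isometry (an element of $\U(2,1)$, giving the first alternative) or an anti-holomorphic one, which after absorbing a unitary factor is the reflection $\iota$ in $H^2_{\br}$, realized by the diagonal matrix $\mathrm{diag}(j,j,j)$; conjugation by that matrix is entrywise complex conjugation, giving the second alternative. You instead stay entirely at the level of matrix algebra: you show $g$ normalizes $\U(2,1)$, split $g=g_0+g_1j$, and use Schur's lemma plus the inequivalence of the standard representation of $\U(2,1)$ with its complex conjugate (detected on the central element $iI$) to force $g\in\U(2,1)$ or $g\in\U(2,1)\cdot j$. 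In effect both proofs compute the normalizer $N_{\Sp(2,1)}(\U(2,1))=\U(2,1)\cup\U(2,1)\,jI$; the paper reads this off from the classification of isometries of $H^2_{\bc}$, while you derive it from scratch. Your route is more self-contained and makes explicit why exactly two alternatives occur (the standard representation is not self-conjugate), at the cost of some bookkeeping with the quaternionic form; it also handles cleanly the final reduction from $\U(2,1)$ to $\SU(2,1)$ by a central scalar, a step the paper leaves implicit. One small caveat: your evaluation at $iI$ uses that the intertwining relation holds for all of $\U(2,1)$, which is legitimate under the reading that $\nu_1$ is Zariski dense in $\U(2,1)$; if one only assumes density in $\SU(2,1)$, the same contradiction is obtained by evaluating at a primitive cube root of unity times the identity, so the argument is robust.
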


\begin{pf}Suppose $Q \nu_1 Q^{-1} =\nu_2$ for $Q\in \Sp(2,1)$. Since $\nu_1$ is Zariski dense,
$Q$ stabilizes $H^2_\bc$ inside $H^2_\bh$. If it is holomorphic, $Q\in \SU(2,1)$. Suppose
it is anti-holomorphic.  Any anti-holomorphic element  in $H^2_\bc$ can be written as $\iota$ followed by
an element in $U(2,1)$ where $\iota$ is a reflection along $H^2_\br$. By absorving the element in $U(2,1)$
we may assume that $Q$ restricted to $H^2_\bc$ is $\iota$. Now, $\iota$ can be realized as a complex conjugate $(z,w)\ra (\bar z,\bar w)$ in unit ball model.

Then $Q$ is realized by a diagonal matrix with entries $(j,j,j)$. Hence we get $Q\nu_1Q^{-1} = \overline{\nu_1}$ and $\nu_1 = \overline{\nu_2}$.

\end{pf}

\subsection{Description of the $\U(2,1)$-character variety for $\Gamma_8$}\label{CharVar-8}
Let $\Gamma_8$ denote the fundamental group of the figure eight knot complement in $\mathbb S^3$. 
Falbel-Guilloux-Koseleff-Rouillier-Thistlethwaite \cite{FGKRT} studied the character variety of $\Gamma_8$ in $\text{PGL}(3,\mathbb C)$ and $\PU(2,1)$. They describe a Zariski open set, through a variant of the character variety: the deformation variety. They show that there exist three irreducible components of the deformation variety. Each  one of these components is smooth of complex dimension two and contains a real-dimension $2$ subvariety of representations landing in $\PU(2,1)$ \cite[Section 5.3]{FGKRT}. 

\begin{Prop}\label{dimU}
The component of the $\U(2,1)$-character variety $\chi(\Gamma_8,\U(2,1))$ through $\rho_0$ has dimension $3$.
\end{Prop}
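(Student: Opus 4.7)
The plan is to compute the Zariski tangent space to $\chi(\Gamma_8,\U(2,1))$ at $[\rho_0]$ via group cohomology, and then to exhibit enough explicit deformations to saturate this bound. Because $\rho_0$ is Zariski-dense in $\SU(2,1)\subset\U(2,1)$, Subsection \ref{generalities-character} identifies this tangent space with $H^1(\Gamma_8,\mathfrak{u}(2,1)_{\Ad\rho_0})$.

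The key structural observation is the $\Ad$-invariant splitting of real Lie algebras
\[ \mathfrak{u}(2,1) = \mathfrak{su}(2,1)\oplus \br\cdot(i\,\id), \]
in which the central summand carries the trivial $\Gamma_8$-action. This induces a direct-sum decomposition
\[ H^1(\Gamma_8,\mathfrak{u}(2,1)_{\Ad\rho_0}) \cong H^1(\Gamma_8,\mathfrak{su}(2,1)_{\Ad\rho_0})\oplus H^1(\Gamma_8,\br). \]
For the first summand, the finite central isogeny $\SU(2,1)\to\PU(2,1)$ gives an isomorphism of $\Gamma_8$-modules $\mathfrak{su}(2,1)\cong\mathfrak{pu}(2,1)$, so this group equals the Zariski tangent space to $\chi(\Gamma_8,\PU(2,1))$ at $[\rho_0]$. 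By the Falbel--Guilloux--Koseleff--Rouillier--Thistlethwaite description recalled above, the component through $[\rho_0]$ is smooth of real dimension $2$, so this summand has dimension $2$. For the second summand, $\Gamma_8$ is a knot group, so $\Gamma_8^{\mathrm{ab}}\cong\bz$ and $H^1(\Gamma_8,\br)=\Hom(\Gamma_8,\br)\cong\br$. Summing, the tangent space has real dimension $3$, which bounds the local dimension from above.

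To match this bound from below, I realize the splitting geometrically. The FGKRT result already supplies a smooth $2$-parameter family $\{\rho_t\}$ of $\SU(2,1)$-deformations of $\rho_0$, while the $1$-dimensional group $\Hom(\Gamma_8,\U(1))\cong\U(1)$ yields a $1$-parameter family of central characters $\{\chi_s\}$ with $\chi_0=1$. The representations $\chi_s\cdot\rho_t$ form a $3$-parameter family of genuine $\U(2,1)$-representations; that their conjugacy classes are pairwise distinct near $[\rho_0]$ follows from the conjugation-invariance of $\det\circ\rho$, which separates the central twists up to cube roots of unity, combined with the injectivity on the $\SU(2,1)$-slice. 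Together with the tangent-space upper bound, this shows the component of $\chi(\Gamma_8,\U(2,1))$ through $[\rho_0]$ is smooth of real dimension exactly $3$.

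The essential input is the FGKRT dimension computation for the component of $\chi(\Gamma_8,\PU(2,1))$ through $[\rho_0]$; once it is available, the rest of the argument is the direct-sum decomposition forced by the $\br$-valued center of $\mathfrak{u}(2,1)$ and the explicit $1$-parameter twist by $\U(1)$-characters. I do not anticipate a serious obstacle beyond checking that twisting by a non-trivial central character genuinely produces a non-conjugate representation, which is immediate from the Zariski-density of $\rho_0$.
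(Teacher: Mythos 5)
Your proposal is correct and follows essentially the same route as the paper: both arguments rest on the FGKRT computation that the $\SU(2,1)$ (equivalently $\PU(2,1)$) component through $[\rho_0]$ has real dimension $2$, and on the fact that $\Gamma_8^{\mathrm{ab}}\cong\bz$ makes the central $\U(1)$-direction contribute exactly one more dimension, via the local decomposition of a $\U(2,1)$-representation into a central character times an $\SU(2,1)$-representation. Your cohomological splitting of $\mathfrak{u}(2,1)$ and the determinant check that central twists are non-conjugate are just a more explicit rendering of the same decomposition the paper uses.
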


\begin{proof}
First of all, $\rho_0$ belongs to one of the components described in \cite{FGKRT}: it corresponds to the point $(u,v) = (-\sqrt{3}i,2)$ from \cite[Section 5.3]{FGKRT}. Hence, we know that the component of the $\SU(2,1)$-character variety $\chi(\Gamma_8,\SU(2,1))$ through $\rho_0$ has real dimension $2$.

Then $\Gamma_8$ is the fundamental group of a knot complement, so its abelianization is $\Z$. Hence the
character variety from $\Gamma_8$ to the center $\U(1)$ of $\U(2,1)$ is of real dimension $1$.

Now any representation $\Gamma_8 \to \U(2,1)$ can be locally decomposed as product of a representation in its center and a representation in $\SU(2,1)$. We get that the component of the $\U(2,1)$-character variety $\chi(\Gamma_8,\U(2,1))$ through $\rho_0$ has dimension $3$.
\end{proof}

\subsection{A known component of the $\U(2,1)$-character variety for $\Gamma_W$}\label{CharVar-W}

 Guilloux-Will studied the character variety $\chi(\Gamma_W, \text{SL}(3,\mathbb C))$. They showed that the
representations studied by Schwartz, Deraux, Falbel, Acosta, Parker, Will \cite{Sch, Der15,Der, Aco15,PW15} all belong to a common
algebraic component $X_0$ consisting of representations that factor through the group $\pi'=\mathbb{Z}_3*\mathbb{Z}_3$. Here $X_0$ is the character variety of $\pi'$ consisting of representations whose images are generated by two regular order 3 elements in $\text{SL}(3,\mathbb C)$. $X_0$ is of complex dimension 4, and the subset of representations in $\mathrm{SU}(2,1)$ is of real dimension $4$.

Moreover, the representation $\rho_W$ belongs to this component $X_0$ \cite[Section 3.4]{GW}. Using that the abelianization of $\Gamma_W$ is $\Z^2$, we get as before:

\begin{Prop}
The component of the $\U(2,1)$-character variety $\chi(\Gamma_W,\U(2,1))$ through $\rho_0$ has dimension $6$.
\end{Prop}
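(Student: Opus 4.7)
The plan is to mirror the argument used for $\chi(\Gamma_8,\U(2,1))$ in Proposition \ref{dimU}, transplanting it to the Whitehead setting. The two ingredients that the previous proposition combined were a dimension count for the $\SU(2,1)$-component through the given representation and a dimension count for the characters into the center $\U(1)$. Both ingredients are already available in the excerpt for $\rho_W$, so the proof should be a short book-keeping exercise.

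First I would invoke the results of Guilloux--Will recalled in Section \ref{CharVar-W}: $\rho_W$ lies on the component $X_0$ of $\chi(\Gamma_W,\SL(3,\bC))$ consisting of representations factoring through $\mathbb{Z}_3*\mathbb{Z}_3$, and $X_0$ is a complex 4-dimensional variety whose subset of $\SU(2,1)$-characters is a real 4-dimensional subvariety. This gives the dimension of the $\SU(2,1)$-component of $\chi(\Gamma_W,\SU(2,1))$ through $\rho_W$, namely $4$.

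Next I would compute the dimension of the character variety $\chi(\Gamma_W,\U(1))$. Since $M_W$ is the complement in $\bS^3$ of a two-component link, its first homology, i.e.\ the abelianization of $\Gamma_W$, is $\Z^2$. Characters into the abelian group $\U(1)$ factor through the abelianization, so $\chi(\Gamma_W,\U(1))=\Hom(\Z^2,\U(1))$ is a real 2-dimensional torus.

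Finally I would combine the two counts using the local decomposition $\U(2,1)\cong (\SU(2,1)\times \U(1))/\mu_3$ at the Lie algebra level: near $\rho_W$, any representation $\Gamma_W\to\U(2,1)$ can be written as the product of a character to the center $\U(1)$ and a representation to $\SU(2,1)$. Because $\rho_W$ is Zariski dense in $\SU(2,1)$ (via the surjection onto $\mathbb{Z}_3 * \mathbb{Z}_3$ and the cited results), the character map is well-behaved at $[\rho_W]$ (cf.\ Section \ref{generalities-character}), so the local dimensions add: $4+2=6$. The only subtle point, which would need a line to address, is the independence of the central and $\SU(2,1)$ factors in the local product structure on the character variety; this follows from the fact that a nontrivial scalar multiple of a Zariski-dense $\SU(2,1)$-representation is not conjugate to the original one, so the two factors contribute independent tangent directions. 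I expect this final reconciliation to be the most delicate step, but it is already implicit in the parallel argument of Proposition \ref{dimU}.
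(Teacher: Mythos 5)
Your proposal is correct and is essentially the paper's own argument: the paper likewise adds the real dimension $4$ of the $\SU(2,1)$-locus in the Guilloux--Will component $X_0$ to the $2$ dimensions of central characters coming from the abelianization $\Z^2$ of $\Gamma_W$, proceeding ``as before'' in Proposition \ref{dimU}. The extra sentence you devote to the independence of the central and $\SU(2,1)$ factors is slightly more careful than what the paper writes down, but the route is the same.
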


\section{Fox calculus and homological computations}\label{hom-comp}

\subsection{General presentation}

In this section, we briefly introduce a Fox calculus which is necessary for the calculation of the first
group cohomology and the Zariski tangent space of $\text{Hom}(\pi, G)$. For a detailed exposition, refer to \cite{Gol} Section 3. Such computations have already been used, e.g. in \cite{BA-H}.  Let $F_n$ be a free group on $n$-generators $x_1,\cdots,x_n$ and $\mathbb{Z}F_n$ the integral group ring. The augmentation homomorphism is a ring homomorphism
$$\epsilon:\mathbb{Z}F_n\rightarrow \mathbb{Z}$$ which maps an element $\sum_{\sigma\in F_n} m_\sigma \sigma$ to the coefficient sum $\sum_{\sigma\in F_n} m_\sigma $.  A derivation is a 
$\mathbb{Z}$-linear map $D:\mathbb{Z}F_n\rightarrow \mathbb{Z}F_n$ satifying $$D(m_1m_2)=D(m_1)\epsilon(m_2)+ m_1D(m_2).$$
Then the set of derivations $Der(F_n)$ is freely generated as a right $\mathbb{Z}F_n$-module by $n$ elements $\partial_i=\frac{\partial}{\partial x_i}$ which satisfy $\frac{\partial}{\partial x_i}(x_j)=\delta_{ij}$.
This derivation satisfies a useful rule of differential calculus, a mean value theorem,
$$u-\epsilon(u)=\sum (\partial_i u) (x_i-1)$$ for any $u\in \mathbb{Z}F_n$.

Let $\phi:F_n\rightarrow \mathrm{GL}(V)$ be a linear representation, which extends to a ring homomorphism
$\mathbb{Z}F_n\rightarrow \text{End}(V)$. Then a cocyle $u:\mathbb{Z}F_n\rightarrow V$ which
satisfies the cocycle identity $u(ab)=u(a)\epsilon(b)+\phi(a)u(b)$, can be written using the mean value
theorem as $$u(w)=\sum_{i=1}^n \phi(\partial_i w) u(x_i).$$

Using this Fox calculus, we can describe the Zariski tangent space to $\text{Hom}(\pi, G)\subset G^n$ for a group $\pi=F_n/\mathcal R$ where $\mathcal R$ is a normal subgroup of $F_n$ consisting of relations and $G$ is a Lie group whose Lie algebra is denoted by $\mathfrak g$.
Since an element in $\text{Hom}(\pi,G)$ corresponds to an element $\phi\in \text{Hom}(F_n,G)$ satisfying $\phi(R)=1$ for all $R\in \mathcal R$, the Zariski tangent space to $\text{Hom}(\pi,G)$ at
$\phi\in \text{Hom}(\pi,G)$ is the space of cocycles
$$Z^1(\pi, \mathfrak{g}_{\Ad \phi})=\{(u_1,\cdots,u_n)\in \mathfrak{g}^n| \sum_{i=1}^n \Ad \phi (\partial_i R)u_i=0,\ \text{for all}\ R\in \mathcal{R}       \}$$  by associating $(\mu(x_1),\cdots, \mu(x_n))$ to each 1-cocycle $\mu$.

Moreover, in order to have the Zariski tangent space to the character variety, you have to mod out by the coboundaries $B^1(\pi,\mathfrak{g}_{Ad \phi})$. In this setting, a coboundary is an element $(u_1,\ldots, u_n) \in \mathfrak{g}^n$ such that there exist some $u \in \mathfrak g$ with:
$$\forall 1\leq i \leq n, \quad u_i = \Ad \phi(x_i) u - u.$$

\subsection{Effective computations for $\Gamma_8$}

The material presented above can be tackled in a very concrete and effective manner. Let us describe the involved computations for the representation $\rho_0 : \Gamma_8 \to \Sp(2,1)$. The actual computations are basic linear algebra, but with matrices a bit too big to be fully displayed here. A Sage Notebook \cite{Notebook} is available showing the computations done by a computer algebra system.

First of all, we use Fox calculus on our presentation of $\Gamma_8$:
$$\Gamma_8 = \langle a,b | b^{-1}aba^{-1}bab^{-1}a^{-1}ba^{-1} \rangle.$$
Let us denote by $R$ the relation $b^{-1}aba^{-1}bab^{-1}a^{-1}ba^{-1}$. 
A straightforward computation gives:
\begin{itemize}
\item $\partial_a R = b^{-1} - b^{-1} a b a^{-1} + b^{-1} a b a^{-1} b - b^{-1} a b a^{-1} b a b^{-1} a^{-1} - b^{-1} a b a^{-1}b a b^{-1} a^{-1} b a^{-1}.$
\item $\partial_b R = -b^{-1} + b^{-1} a + b^{-1} a b a^{-1} - b^{-1} a b a^{-1} b a b^{-1} + b^{-1} a b a^{-1} b a b^{-1} a^{-1}$.
\end{itemize}
Let us note that in Sagemath, the Fox calculus is implemented and the result of this computation is given by the so-called Alexander matrix.

From this, the whole computation of the Zariski tangent space follows. This computation can be seen in the notebook, and the steps are: 
\begin{itemize}
\item Choose a basis for $\mathfrak {sp}(2,1)$: its cardinality is $21$. Pairs of vectors in this basis give a basis of the cochains $C^1$: as presented above, a cochain is seen as an element of $\mathfrak {sp}(2,1)^2$.
\item Compute both  $21\times 21$ matrices representing in this basis the adjoint action $\Ad(x)$ and $\Ad(y)$ of the generators, $x=\rho_0(a), y=\rho_0(b)$.
\item Compute $B^1$ as the image of the $42\times 21$ matrix $\begin{pmatrix} \Ad(x) - \id\\\Ad(y) - \id\end{pmatrix}$ in the chosen basis.
\item Using the different terms $\partial_a R$, $\partial_b R$ appearing in the definition of $Z^1$ as in the previous lemma,
applying $\Ad(\rho_0)$ to these expressions, we get the  $21\times 42$ matrix whose kernel is $Z^1$:
$$\begin{pmatrix}  \Ad\rho_0(\partial_a R), \Ad\rho_0(\partial_b R)  \end{pmatrix}\begin{pmatrix} u_1\\ u_2 \end{pmatrix}=0.$$
\item Compute the dimension of $Z^1/B^1$. Note that $\rho_0$ has entries in a number field: the computation can be done exactly and the computed dimension has a true meaning.
\end{itemize}

As a result of this computation, we get:
\begin{Prop}\label{dimSp}
The component of the character variety $\chi(\Gamma_8,\Sp(2,1))$ through $\rho_0$ has dimension 3.
\end{Prop}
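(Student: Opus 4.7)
The plan is to execute the general Fox-calculus recipe of Section \ref{hom-comp} with the specific data just laid out: the two Fox derivatives $\partial_a R$ and $\partial_b R$ of the single relator of $\Gamma_8$, together with the explicit matrices $\rho_0(a)$ and $\rho_0(b)$. The target is the Zariski tangent space $H^1(\Gamma_8,\mathfrak{sp}(2,1)_{\Ad\rho_0}) = Z^1/B^1$, which I aim to show has real dimension $3$.

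To set this up concretely, I would realize $\Sp(2,1)\subset \mathrm{GL}(3,\mathbb{H})$ and fix a real basis of the $21$-dimensional Lie algebra $\mathfrak{sp}(2,1)$, for instance by identifying $\mathbb{H}$ with a subalgebra of $M_2(\mathbb{C})$. Relative to this basis, $\Ad\rho_0(a)$ and $\Ad\rho_0(b)$ become two $21\times 21$ matrices with entries in the number field $\mathbb{Q}(i\sqrt{3})$, so every subsequent computation is exact. The coboundary space $B^1$ is the image of the $42\times 21$ block matrix $\begin{pmatrix}\Ad\rho_0(a)-I\\ \Ad\rho_0(b)-I\end{pmatrix}$. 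For the cocycle space, I substitute $x=\rho_0(a)$ and $y=\rho_0(b)$ into the displayed expressions for $\partial_a R$ and $\partial_b R$, apply $\Ad$ to the resulting two elements of $M_3(\mathbb{H})$, and concatenate to form a $21\times 42$ matrix $M$; then $Z^1=\ker M$. A single rank computation then yields $\dim(Z^1/B^1)=3$.

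This tangent-space count, combined with what is already in place, delivers the proposition. Indeed, Proposition \ref{dimU} together with the local-diffeomorphism statement for $\chi(\Gamma_8,\U(2,1))\to\chi(\Gamma_8,\Sp(2,1))$ from Section \ref{generalities-character} produces a $3$-dimensional family in $\chi(\Gamma_8,\Sp(2,1))$ through $[\rho_0]$, so the local dimension is at least $3$; the $H^1$ computation supplies the matching upper bound, so $\chi(\Gamma_8,\Sp(2,1))$ is in fact smooth and $3$-dimensional at $[\rho_0]$.

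The main obstacle is computational bookkeeping: the matrices are far too large to work with by hand, and one must consistently track the choice of basis of $\mathfrak{sp}(2,1)$, the embedding $\SU(2,1)\hookrightarrow\Sp(2,1)$, and the sign conventions in the Fox derivatives. The accompanying Sage notebook \cite{Notebook} carries out these steps symbolically over $\mathbb{Q}(i\sqrt{3})$, producing a certifiably exact answer rather than a floating-point approximation.
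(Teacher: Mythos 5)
Your proposal follows essentially the same route as the paper: the same Fox-calculus setup, the same $42\times 21$ coboundary matrix and $21\times 42$ cocycle matrix built from $\partial_a R$ and $\partial_b R$, and the same exact computation over a number field via the Sage notebook to get $\dim(Z^1/B^1)=3$. Your added remark that the $3$-dimensional $\U(2,1)$-family supplies the matching lower bound (so the Zariski tangent space count actually pins down the local dimension of the component, not just an upper bound) is a worthwhile clarification the paper leaves implicit, but it does not change the method.
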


Proof of Proposition \ref{prop:rigidity}: The Lie algebra $\mathfrak{sp}(2,1)$ decomposes as $\mathfrak{u}(2,1)\oplus S^2{\mathbb C}^3$ under $\rho_0$ as a real representation, see \cite{KKP}.  Hence
$$H^1(\Gamma_8, \mathfrak{sp}(2,1))=H^1(\Gamma_8, \mathfrak{u}(2,1))+ H^1(\Gamma_8, S^2{\mathbb C}^3).$$
By above Propositions \ref{dimU} and \ref{dimSp},  $H^1(\Gamma_8, S^2{\mathbb C}^3)=0$, which implies that all the small deformations of $\rho_0$ in $\Sp(2,1)$ are conjugate to the ones in $\U(2,1)$.

\section{Order 3 elements and the deformation of $\rho_W$}\label{Wlocaldim}

We compute in this section a lower bound on the dimension of a component of the character variety $\chi(\Gamma_W,\Sp(2,1))$:
\begin{Prop}
The dimension around $[\rho_W]$ of the character variety $\chi(\Gamma_W,\Sp(2,1))$ is at least $7$.
\end{Prop}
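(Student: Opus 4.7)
The plan is to exploit the factorization of $\rho_W$ through $\pi' := \Z_3 * \Z_3$. The quotient map $q : \GW \twoheadrightarrow \pi'$ induces by pullback an injection $q^* : \chi(\pi',\Sp(2,1)) \hookrightarrow \chi(\GW,\Sp(2,1))$ sending the class of $\rho_W$ (regarded as a representation of $\pi'$) to $[\rho_W]$; hence it suffices to prove $\dim_{[\rho_W]}\chi(\pi',\Sp(2,1)) \geq 7$.

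Since $\pi'$ is a free product, $\Hom(\pi',\Sp(2,1))$ is identified with $\{(g_1,g_2)\in\Sp(2,1)^2 : g_1^3 = g_2^3 = 1\}$. Every finite-order element of $\Sp(2,1)$ is elliptic, hence conjugate into the maximal compact $\Sp(2)\times\Sp(1)$; since a compact Lie group carries only finitely many conjugacy classes of a given order, $\{g^3=1\}$ is a disjoint union of finitely many closed (because semisimple) conjugacy classes, each open in $\{g^3=1\}$. Consequently, near $(\rho_W(a),\rho_W(b))$ the representation variety is smooth and locally equals $C_a\times C_b$, the product of the two $\Sp(2,1)$-conjugacy classes of the generators.

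The core step is to compute $\dim C_a = 21 - \dim Z_{\Sp(2,1)}(\rho_W(a))$ (identically for $C_b$). Both $\rho_W(a)$ and $\rho_W(b)$ have trace $0$ and order $3$ in $\SU(2,1)$, so their $\bc$-eigenvalues are the three cube roots of unity $\{1,\omega,\omega^2\}$ with $\omega=e^{2i\pi/3}$. After complexification, $\mathfrak{sp}(2,1)\otimes_{\br}\bc = \mathfrak{sp}(6,\bc)$, and the real dimension of $Z_{\Sp(2,1)}(\rho_W(a))$ equals the complex dimension of $Z_{\Sp(6,\bc)}(\rho_W(a))$ (both are the kernel of $\Ad(\rho_W(a))-\id$, an operator defined over $\br$). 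Viewed inside $\Sp(6,\bc)$, each $\bc$-eigenvalue of a real element is paired with its complex conjugate, and since $\bar\omega=\omega^2$ the six eigenvalues of $\rho_W(a)$ are $\{1,1,\omega,\omega,\omega^2,\omega^2\}$, giving a decomposition of $\bc^6$ into three $2$-dimensional eigenspaces $V_1\oplus V_\omega\oplus V_{\omega^2}$. The symplectic form restricts nondegenerately to $V_1$ and pairs $V_\omega$ with $V_{\omega^2}$, so the $\Sp(6,\bc)$-centralizer equals $\Sp(V_1)\times\operatorname{GL}(V_\omega)\cong\SL(2,\bc)\times\operatorname{GL}(2,\bc)$, of complex dimension $3+4=7$.

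Therefore $\dim C_a = \dim C_b = 14$ and $\dim_{\rho_W}\Hom(\pi',\Sp(2,1)) = 28$. The $\Sp(2,1)$-conjugation orbit of $\rho_W$ has dimension at most $\dim\Sp(2,1) = 21$, so $\dim_{[\rho_W]}\chi(\pi',\Sp(2,1)) \geq 28 - 21 = 7$, which, combined with $q^*$, yields the proposition. The main obstacle is precisely this centralizer count: $\rho_W(a)$ is regular in $\SU(2,1)$ with a $2$-dimensional centralizer, but the doubling of its $\bc$-eigenvalues passing to $\Sp(6,\bc)$ forces a $7$-dimensional centralizer in $\Sp(2,1)$ — neither the $\SU(2,1)$-value nor the generic rank-$3$ value for a fully regular element of $\Sp(2,1)$ — and it is exactly this numerical coincidence that makes the lower bound $\geq 7$ work out.
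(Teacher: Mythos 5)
Your proof is correct and follows essentially the same route as the paper: reduce to pairs of order-$3$ elements via the factorization through $\Z_3 * \Z_3$, identify the local structure of the representation variety with (a product of) conjugacy classes, and obtain $7 = 2(21-7) - 21$ from the $7$-dimensional centralizer of $\rho_W(a)$. The only difference is cosmetic: you compute that centralizer dimension self-containedly by complexifying to $\Sp(6,\bc)$ and pairing the doubled eigenspaces $V_1, V_\omega, V_{\omega^2}$, whereas the paper conjugates $\rho_W(a)$ to $\mathrm{diag}(1,\omega,\omega)$ and quotes Cao--Gongopadhyay for the centralizer $\Sp(1)\times\U(2)$ --- both yield $7$.
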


\begin{proof}
As we saw in Section \ref{deform}, the image of $\rho_W$ is isomorphic to $\Z_3 \star \Z_3$, with $\rho_W(a)$ and $\rho_W(b)$ being  two order $3$ generators.

Moreover, as recalled in section \ref{CharVar-W}, the whole component of the $\SU(2,1)$-character variety containing $[\rho_W]$ is made from representations $[\rho]$ with $\rho(a)$ and $\rho(b)$ being two order $3$ elements of $\SU(2,1)$.

Let $\mathcal E = \left\{ (A,B) \in \Sp(2,1)^2 \quad A^3=B^3=1\right\}$. Then we have an inclusion $\mathcal E/\Sp(2,1) \to \chi(\Gamma_W, \Sp(2,1))$.

As a matrix of $\SU(2,1)$, the eigenvalues of $\rho_W(\alpha)$ are $1$, $\omega$, $\omega^2$, 
where $\omega^3=1$ in $\bc$. So, inside $\Sp(2,1)$, $\rho_W(\alpha)$ is conjugate \cite{CaoGongopadhyay} to the matrix 
$A=\begin{pmatrix} 1 & 0 & 0 \\
                    0 & \omega & 0 \\
                    0 & 0 & \omega 
\end{pmatrix}$.

By deforming the pair $(\rho_W(\alpha),\rho_W(\beta))$ to a pair of order $3$ matrices in 
$\Sp(2,1)$ and up to conjugation, we may assume that the first one always equals $A$. 
Its centralizer \cite[Section 5.1]{CaoGongopadhyay} in $\Sp(2,1)$ is the subgroup of block-diagonal matrices:
$$Z = \left\{ \begin{pmatrix} x & \\ & X\end{pmatrix}\in \Sp(2,1) \quad\textrm{where }x\in \bh, X \in\U(2)\right\}.$$
Note that the dimension of $Z$ is $7$.

The second matrix $B$ of the pair is another order $3$ matrix, conjugate to $A$. So we are indeed looking at the set of pairs $(A,gAg^{-1})$ up to conjugation. In other terms, let 
$$\mathcal E' = \left\{ (A,gAg^{-1}) \textrm{ for } g\in \Sp(2,1)\right\}.$$ Then locally around $[\rho_W]$ we have $\mathcal E/\Sp(2,1) = \mathcal E'/Z$.

Eventually, we see that for any $g\in\Sp(2,1)$ and $h\in\Sp(2,1)$, the two pairs $(A,gAg^{-1})$ and 
$(A,hAh^{-1})$ are conjugate if and only if there exist $z_1$ and $z_2$ in $Z$ such that $h = z_1gz_2$. 

Hence the dimension of $\mathcal E'/Z$ is at least $\mathrm{dim}(\Sp(2,1)) -2\mathrm{dim}(Z) = 7$. This implies that the dimension around $[\rho_W]$ of $\chi(\Gamma_W,\Sp(2,1))$ is at least $7$.
\end{proof}
Indeed,  the dimension of $\chi(\Gamma_W,\Sp(2,1))$ around any point in the component $\cal C$ containing $[\rho_W]$ of $\chi(\Gamma_W,\U(2,1))$ 
is at least 7 as we can see as follows.
Note that any point in $\cal C$ can be written as a pair $(\alpha C, \beta B)$ with $\alpha,\beta\in U(1)$ and $C^3=B^3=I$ in $\SU(2,1)$. This point is conjugate to $(g_0\alpha g_0^{-1} A, g_0\beta g_0^{-1} h_0 A h_0^{-1})$  for some $g_0,h_0\in\Sp(2,1)$.  Now by varing $h\in \Sp(2,1)$, as in the proof above, there are at least 7-dimensional
space of $\{(g_0\alpha g_0^{-1} A, g_0\beta g_0^{-1} h A h^{-1})| h\in\Sp(2,1)\}$ near $(g_0\alpha g_0^{-1} A, g_0\beta g_0^{-1} h_0 A h_0^{-1})$ in $\chi(\Gamma_W, \Sp(2,1))$.

Note that the proposition \ref{prop:deformability} is now proven: the space of deformations of $\rho_W$ in $\Sp(2,1)$ has bigger dimension than the space of deformations in $U(2,1)$ showing that some deformations are not conjugate to $\U(2,1)$.

\bibliographystyle{plain}
\bibliography{biblio}

\end{document}